\numberwithin{equation}{section}
\theoremstyle{plain}
\newtheorem{cor}[equation]{Corollary}
\newtheorem{lemma}[equation]{Lemma}
\newtheorem{prop}[equation]{Proposition}
\newtheorem*{thma}{Theorem A}
\newtheorem*{thmb}{Theorem B}
\newtheorem*{thmc}{Theorem C}
\newtheorem*{cord}{Corollary D}
\theoremstyle{definition}
\newcommand{\dlabel}[1]{\ifmmode \text{\ttfamily \upshape [#1] } \else
{\ttfamily \upshape [#1] }\fi \label{#1}}
\newcommand{\C}{\operatorname{C} }
\newcommand{\F}{\operatorname{F} }
\newcommand{\Z}{\operatorname{Z} }
\newcommand{\gen}[1]{\left < #1 \right >}
\begin{document}

\title{On subgroups generated by small classes in finite groups}

\author{Manoj K.~Yadav}

\address{School of Mathematics, Harish-Chandra Research Institute \\
Chhatnag Road, Jhunsi, Allahabad - 211 019, INDIA}

\email{myadav@hri.res.in}

\subjclass[2000]{Primary 20D25}
\keywords{Conjugacy class, Fitting subgroup, solvable, nilpotency class}

\begin{abstract}
Let $G$ be a finite group and  $M(G)$ be the subgroup of $G$ generated by 
all non-central elements of $G$ that 
lie in the conjugacy classes of the smallest size. Recently several results have been proved regarding the nilpotency class of $M(G)$ and $F(M(G))$, where $F(M(G))$ denotes the Fitting subgroup of $M(G)$. We prove some conditional results regarding the nilpotency class of $M(G)$.
\end{abstract}

\maketitle

\section{Introduction}

Let $G$ be an arbitrary finite group. An element $x$ of $G$ is said to be \emph{small} if $x$ is non-central and lies in some conjugacy class of the smallest size in $G$. Let $M(G)$ denote the subgroup of $G$ which is generated by all small elements of $G$.

For a given subgroup $H$ of $G$ and an element $x \in G$, by
 $x^H$ we denote the entire $H$-class of $x$ constisting of elements of
 the form $h^{-1}xh$, where $h$ runs over every element of $H$. For the same
 $H$ and $x$, we denote the set $\{[x, h] | h \in H\}$ by $[x, H]$, where $[x, h]$ denotes the commutator $x^{-1}h^{-1}xh$ of $x$ and $h$.
Since $x^H = x[x, H]$, it follows that $|x^H| = |[x, H]|$. By $\C_H(x)$ we denote the centralizer of $x$ in $H$. A subset $S$ of a group $G$ is said to be \emph{normal} if $g^{-1}Sg = S$ (or equivalently $g^{-1}Sg \subseteq S$) for
all $g \in G$. We write the subgroups in the lower central series of $G$ as 
$\gamma_n(G)$, where $\gamma_1(G) = G$ and $\gamma_{n+1}(G) = [\gamma_n(G),
G]$ for all $n > 1$.

In 1953, N. Ito \cite{nI53} proved that if $G$ is a finite group in which the conjugacy classes of non-central elements are all of the same size n (say), then $G$ must be nilpotent and $n = p^m$ for some prime integer $p$ and some positive integer $m$. In 2002, K. Ishikawa \cite{kI02} proved that the nilpotency class of these groups can not be more than $3$. In 2006, A. Mann \cite{aM06} proved that for any finite nilpotent group $G$, $M(G)$ is nilpotent of class at most $3$.
Recently M. Isaacs \cite[Theorem A]{mI08} generalized these results for non-nilpotent groups. He proved that if a finite group
$G$ contains a normal abelian subgroup $A$ such that $\C_{G}(A) = A$, then
$M(G)$ is nilpotent, and it has nilpotency class at most $3$. Isaacs' result shows that if $G$ is a finite group which is either nilpotent or supersolvable, then $M(G)$ is nilpotent of class at most $3$. In continuation,
A. Mann \cite{aM08} proved that $M(G)$ is nilpotent of class at most $3$ if 
either $M(G)$ is solvable and contains a normal subgroup $N$ with abelian Sylow subgroups such that $G/N$ is nilpotent, or $G$ is solvable and contains a normal subgroup $N$ with abelian Sylow subgroups such that $G/N$ is supersolvable.

The aim of this note is to prove some conditional results regarding the nilpotency class of $M(G)$ for a finite group $G$ and to give a further research direction to the topic. Let $F(G)$ denote the Fitting subgroup of a given finite group $G$. Then we prove the following theorem.

\begin{thma}
 Let $G$ be a finite group such that $\C_{G}(F(G)) \le F(G)$ and $[x, F(G)]$ is a normal subset of $F(G)$ for every small element $x$ of $G$. Then $M(G)$ is nilpotent of  class at most $2$. 
\end{thma}

To elaborate Theorem A, we would like to remark that the conditions given in this theorem are naturally satisfied in many cases. Some of such instances are mentioned below in Theorem B. The following conjecture is posed by Alexander Moreto (private communication):

\vspace{.15in}

\noindent {\bf Conjecture A.} Let $G$ be a finite solvable group with trivial center. Then every small element of $G$ lies in the center of $F(G)$.
\vspace{.15in}

In Proposition \ref{prop3}, we prove that Conjecture A is equivalent to the following conjecture:
\vspace{.15in}

\noindent {\bf Conjecture B.}  Let $G$ be a finite solvable group with trivial center. Then, for every small element $x$ of $G$, 
$[x, F(G)]$ is a normal subset of $F(G)$.
\vspace{.15in}

Following \cite{cS62}, we say that a subgroup $N$ of a group $G$ is \emph{$c$-closed} if any two elements of $N$, which are conjugate in $G$, are also conjugate in $N$. More results on this topic can be found in \cite{cS68}. Following \cite{TM04}, we say that a group $G$ is \emph{flat} if $[x, G]$ is a subgroup of $G$ for all $x \in G$. The following theorem elaborates the usefulness of Conjecture B.

\begin{thmb}
Let $G$ be a finite solvable group with trivial center. Then Conjecture A holds true if one of the following holds:
\begin{enumerate}
\item $F(G)$ is ablein;
\item $\gamma_2(G) \cap F(G) \le \Z(F(G))$;
\item $F(G)$ is $c$-closed in $G$ and the nilpotency class of $F(G)$ is $2$;
\item $M(G) \le \F(G)$ and $F(G)$ is flat. 
\end{enumerate}
\end{thmb}

Notice that, in all given cases of Theorem B, $[x, F(G)]$ is a normal subset of $F(G)$ for every small element $x$ of $G$. Thus Conjecture B holds true. Now the proof of Theorem B follows from the equivalence of Conjectures A and B. We would like to remark that Conjcture A can also be stated in the following equivalent form, which can be proved easily by Proposition \ref{prop2} given below:
\vspace{.15in}

\noindent {\bf Conjecture C.} Let $G$ be a finite solvable group with trivial center. Then, for every small element $x$ of $G$, 
$[x, F(G)] \subseteq \Z(F(G))$.
\vspace{.15in}

 In the following theorem, we generalize (at least in principle) Isaacs' result \cite[Theorem A]{mI08}.

\begin{thmc}
Let $G$ be a finite group that contains a normal subgroup $A$ such that 
$\C_{G}(A) \le A$ and $[A, x]$ is a normal subset of $A$ for all small elements $x \in G$. 
Then $M(G)$ is nilpotent, and it has nilpotency class at most $3$. 
\end{thmc}

Notice that $[A, x]$ is a normal subset of $A$ if it is contained in $\Z(A)$. Thus Theorem A gives the following result, which in turn readily gives Isaacs' result.

\begin{cord}
Let $G$ be a finite group that contains a normal subgroup $A$ such that 
$\C_{G}(A) \le A$ and $[A, x]$ is contained in the center of $A$ for every small element $x$ of $G$. 
Then $M(G)$ is nilpotent of class at most $3$. 
\end{cord}

\section{Proofs}

We start with the following lemma which is a generalization of Lemma 1 of Martin Isaacs \cite{mI08}.  
 
\begin{lemma}\label{lemma2}
Let $G$ be a finite group and $K$ be a normal subgroup of $G$. Let $x \in G -
\Z(G)$ such that $[x, K]$ is a normal subset of $K$. Then, for any $y \in [x, K]$, $|\C_{G}(y)| > |\C_{G}(x)|$.
\end{lemma}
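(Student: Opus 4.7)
The plan is to reduce the claim $|C_G(y)| > |C_G(x)|$ to the equivalent class-size inequality $|y^G| < |x^G|$ via orbit--stabilizer, and then bound $|y^G|$ using the hypothesis that $[x, K]$ is $G$-invariant.

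First I would dispose of the trivial case $y = 1$: then $C_G(y) = G$, and since $x \notin \Z(G)$ we have $C_G(x) \subsetneq G$, so the inequality is immediate. Assume henceforth $y \ne 1$. The key observation is that because $[x, K]$ is a normal subset of $K$ (so in particular closed under $G$-conjugation), the entire orbit $y^G$ lies inside $[x, K]$. Combining this with the identity $|[x,K]| = |x^K|$ recalled in the preamble and the trivial inclusion $x^K \subseteq x^G$ yields
\[
|y^G| \;\le\; |[x,K]| \;=\; |x^K| \;\le\; |x^G|,
\]
which is so far only a weak inequality.

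The final step---really the only non-formal one---is to upgrade this to strict inequality. I would do this by noting that $1 = [x, 1_K] \in [x, K]$ is a $G$-fixed element of $[x,K]$, and since $y \ne 1$ no $G$-conjugate of $y$ equals $1$. Thus $y^G$ is a proper subset of $[x, K]$, which forces
\[
|y^G| \;\le\; |[x,K]| - 1 \;=\; |x^K| - 1 \;<\; |x^K| \;\le\; |x^G|,
\]
and the lemma follows after passing back to centralizers. I anticipate no real obstacle: the whole argument is essentially a counting argument once one spots that the identity always lies in $[x, K]$, which is exactly what separates $y^G$ from the ambient set and breaks the weak inequality into a strict one.
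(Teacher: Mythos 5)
The step that breaks is the parenthetical claim that $[x,K]$ being a normal subset of $K$ means it is ``in particular closed under $G$-conjugation.'' It is not: normality of $[x,K]$ as a subset of $K$ only gives invariance under conjugation by elements of $K$. Conjugating by a general $g \in G$ sends $[x,k]$ to $[x^g,k^g]$, which has no reason to lie in $[x,K]$ once $x^g \neq x$. (For instance, if $K$ is abelian then \emph{every} subset of $K$ is a normal subset of $K$, but hardly any are $G$-invariant.) Consequently the containment $y^G \subseteq [x,K]$, on which both your weak inequality $|y^G| \le |[x,K]|$ and its strict upgrade rest, is unjustified, and the argument does not go through as written.

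The paper gets around exactly this obstacle by working inside the subgroup $H = K\,\C_{G}(x)$ rather than in all of $G$. It first proves that $[x,K]$ \emph{is} a normal subset of $H$: writing $h = uv$ with $u \in K$ and $v \in \C_{G}(x)$, the $u$-conjugation is absorbed by normality of $[x,K]$ in $K$, and the $v$-conjugation sends $[x,k]$ to $[x, v^{-1}kv] \in [x,K]$ because $v$ fixes $x$ and normalizes $K$. Then your counting argument runs verbatim with $y^H$ in place of $y^G$: since $1 = [x,1] \in [x,K]$ and $y \neq 1$, the class $y^H$ sits inside $[x,K]\setminus\{1\}$, giving $|y^H| < |[x,K]| = |x^K| = |x^H|$, i.e.\ $|\C_{H}(y)| > |\C_{H}(x)|$. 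Finally $\C_{H}(x) = \C_{G}(x)$ (as $\C_{G}(x) \le H$) and $\C_{H}(y) \le \C_{G}(y)$ yield $|\C_{G}(y)| > |\C_{G}(x)|$. So your key idea --- that the identity element of $[x,K]$ forces the orbit of $y$ to be a proper subset and hence the inequality to be strict --- is exactly the right one; what is missing is the correct choice of ambient group for the orbit count, which must be $H$, not $G$.
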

\begin{proof}
If $y = 1$, then the result follows trivially, since $x$ is non-central
element of $G$. So assume that $y \neq 1$. Now let $H = K \C_{G}(x)$. Notice that
$y \in H$,  $|x^K| = |x^H|$ and $\C_{H}(x) = \C_{G}(x)$. Therefore it suffices to show that
$|H : \C_{H}(y)| < |H : \C_{H}(x)|$.

We claim that $[x, K]$ is a normal subset of $H$. For, let $h \in H$ and 
$[x, k] \in [x, K]$. Then $h = uv$, for some $u \in K$ and $v \in \C_G(x)$.
So $w := h^{-1} [x, k]h = v^{-1}u^{-1}[x, k]uv = v^{-1} [x, k_1]v$ for some
$k_1 \in K$, since $[x, K]$ is a normal subset of $K$. Now 
$w = [v^{-1}xv, v^{-1}kv] = [x, v^{-1}kv] = [x, k_2]$ for $k_2 = v^{-1}kv \in
K$, since $v \in \C_G(x)$ and $K$ is normal in $G$. Thus $w \in [x, K]$. 
This proves our claim, i.e., $[x, K]$ is a normal subset of $H$. Notice
that $1 = [x, 1] \in [x, K]$. 
Since $y$ is a non-trivial element of $[x, K]$, which is normal in $H$, 
it follows that $y^H$ consists of non-identity elements of $[x, K]$. Thus  $|y^H| < |[x, K]|$. 
Since $|[x, K]| = |x^K|$ and $|x^K| = |x^H|$, we get 
\[|H : \C_{H}(y)| = |y^H| < |[x, K]| = |x^K| = |x^H| =  |H : \C_{H}(x)|.\]
 This completes the proof of the lemma. \hfill $\Box$ 

\end{proof}

As an application  of Lemma \ref{lemma2} we have the following two propositions.

\begin{prop}\label{prop1}
Let $K$ be a normal subgroup of an arbitrary finite group $G$ such that  
$[x, K]$ is a normal subset of $K$ for all small elements $x$ of $G$. Then  $[M(G), K] \le \Z(G)$.
\end{prop}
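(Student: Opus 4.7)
The plan is to reduce the claim to showing $[x,K]\subseteq \Z(G)$ for every small element $x$ of $G$, and then appeal directly to Lemma~\ref{lemma2} together with the definition of ``small''.

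First I would observe that if $x$ is small and central, then $[x,K]=\{1\}\subseteq \Z(G)$, so there is nothing to do. So assume $x$ is a small element with $x\notin \Z(G)$. By definition, $|x^G|$ is then the smallest non-trivial conjugacy class size in $G$ (the two smallest class sizes being $1$ and this value). Pick any $y\in [x,K]$. By hypothesis $[x,K]$ is a normal subset of $K$, and $K\normaleq G$, so Lemma~\ref{lemma2} applies and yields $|\C_G(y)| > |\C_G(x)|$, i.e.\ $|y^G| < |x^G|$. But $|x^G|$ is the smallest non-trivial class size of $G$, so $|y^G|<|x^G|$ forces $|y^G|=1$, that is, $y\in \Z(G)$. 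Thus $[x,K]\subseteq \Z(G)$ for every small element $x$.

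It remains to promote this inclusion from the generators $x$ to all of $M(G)$. For this I would use the standard commutator identities
\[
[ab,k] = [a,k]^{b}\,[b,k], \qquad [a,kk'] = [a,k']\,[a,k]^{k'},
\]
and the crucial fact that an element of $\Z(G)$ is fixed by any conjugation. Writing an arbitrary $m\in M(G)$ as a product of small elements (and their inverses) and an arbitrary $k\in K$ as a product of elements of $K$, a straightforward induction shows that $[m,k]$ is a product of terms of the form $[x,k']$ with $x$ small and $k'\in K$; each such term lies in $\Z(G)$ by the previous paragraph, and because central elements are conjugation-invariant the $^b$-conjugations in the identities above are trivial. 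Hence $[m,k]\in \Z(G)$ for all $m\in M(G)$ and $k\in K$, and therefore $[M(G),K]\le \Z(G)$.

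I do not anticipate a serious obstacle here. The only mildly delicate point is the bookkeeping in the last step, where one must be careful that the generator-by-generator reduction really does stay inside $\Z(G)$; this is precisely why one needs the commutators $[x,k]$ to land in the center of the whole group $G$ (and not merely of some smaller subgroup), so that the conjugations appearing in $[ab,k]=[a,k]^b[b,k]$ become trivial. Once that is noted, the argument is essentially one application of Lemma~\ref{lemma2} combined with the definition of small elements.
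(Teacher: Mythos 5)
Your proposal is correct and follows essentially the same route as the paper: reduce to showing $[x,K]\subseteq \Z(G)$ for each small $x$, then apply Lemma~\ref{lemma2} and the observation that a non-central small element has the minimal non-trivial class size, forcing $|y^G|=1$. The only difference is that you spell out the commutator-identity bookkeeping for passing from generators to all of $M(G)$, which the paper leaves implicit in its opening ``it suffices'' sentence.
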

\begin{proof}
It suffices to prove that $[x, K] \subseteq \Z(G)$ for all small elements $x$ of $G$. Let $x$ be an arbitrary  small element of $G$ such that $[x, K]$ is normal in $K$ as a subset. Let $y \in [x, K]$ be an arbitrary element. Then it follows from Lemma \ref{lemma2} that 
$|y^G| < |x^G|$. Hence $|y^G| = 1$, and hence $y \in \Z(G)$. 
\hfill $\Box$

\end{proof}

\begin{prop}\label{prop2}
 Let $K$ be a normal subgroup of an arbitrary finite group $G$ and $x$ be a small element of $G$. Then $[x, K] \subseteq \Z(K)$ if and only if $[x, K] \subseteq \Z(G)$.
\end{prop}
\begin{proof}
 Suppose that $[x, K] \subseteq \Z(G)$. Since $K$ is normal in $G$, $[x, K] \subseteq K$. Thus $[x, K] \subseteq K \cap \Z(G) \le \Z(K)$.
Conversely, suppose that $[x, K] \subseteq \Z(K)$. Then $[x, K]$  is a normal subset of $K$. Let $y \in [x, K]$ be an arbitrary element. Then it follows from Lemma \ref{lemma2} that $|y^G| < |x^G|$. Hence $|y^G| = 1$, and hence $y \in \Z(G)$. 
\hfill $\Box$

\end{proof}

\vspace{.15in}

Now we are ready to prove our results stated in the introduction.

\vspace{.13in}

\noindent \emph{Proof of Theorem A.}
By the given hypothesis, we have $\C_{G}(F(G)) \le F(G)$. Let $\bar{G} = G/\Z(G)$. We claim that $\C_{\bar{G}}(F(\bar{G})) \le F(\bar{G})$. Let $\pi$ be the natural projection from $G$ to $\bar{G}$. Let $C$ be the inverse image of $\C_{\bar{G}}(F(\bar{G})) = \C_{\bar{G}}(F(G)/\Z(G))$ under $\pi$. Then it follows that $C$ is a normal subgroup of $G$ containing $\C_{G}(F(G))$ and satifying $[C, F(G)] \le \Z(G)$. Now $\hat C= C/\C_G(F(G))$ acts
faithfully as automorphisms of $F(G)$ centralizing both $\Z(G)$ and
$F(G)/\Z(G)$. It follows that $\hat C$ is abelian, and acts on the
abelian group $\C_G(F(G)) = \Z(F(G))$. Furthermore, it centralizes both
$\Z(F(G))/\Z(G)$ and $\Z(G)$. This implies that $C$ is nilpotent as well
as normal in $G$.  Therefore $C \le F(G)$ and $C/\Z(G) = \C_{\bar
G}(F(\bar G)) \le F(G)/\Z(G) = F(\bar G)$. This proves our claim.

Let $x$ be an arbitrary small element of $G$. Then by the given hypothesis we know that $[x, F(G)]$ is a normal subset of $F(G)$. Thus it follows from the proof of Proposition \ref{prop1} that  $[x, F(G)] \subseteq \Z(G)$. So the image $\bar{x} = x\Z(G)$ of $x$ under $\pi$ satisfies
 \[  [\bar x, F(\bar G)]= [x\Z(G), F(G)/\Z(G)] = [x,F(G)]\Z(G)/\Z(G) = \bar 1.\]
Hence $\bar x \in \C_{\bar G}(F(\bar G)) = \Z(F(\bar G))$, since $\C_{\bar{G}}(F(\bar{G})) \le F(\bar{G})$. Therefore $x \Z(G) \in F(\bar G) =
F(G)/\Z(G)$ and $x \in F(G)$. Since $[x, F(G)] \subseteq \Z(G) \le \Z(F(G))$, it now follows that $x$ belongs to the second center of $F(G)$.  Now it is clear that the nilpotency class of $M(G)$ is at most $2$. This completes the proof of the theorem. \hfill $\Box$

\vspace{.13in}

We would like to remark here that the first hypothesis in Theorem A, i.e., $\C_{G}(F(G)) \le F(G)$, is naturally satisfied in all finite solvable groups $G$. So, as a corollary we have

\begin{cor}
 Let $G$ be a finite solvable group such that $[x, F(G)]$ is a normal subset of $F(G)$ for every small element $x$ of $G$. Then $M(G)$ is nilpotent of  class at most $2$.
\end{cor}

The following proposition proves the equivalence of the Conjectures $A$ and  $B$ stated in the introduction.

\begin{prop}\label{prop3}
 Let $G$ be a finite solvable  group with trivial center and $x$ be a small element of $G$. Then the following are equivalent:
\begin{enumerate}
\item  $x \le \Z(F(G))$;
\item $[x, F(G)]$ is a normal subset of $F(G)$.
\end{enumerate}
\end{prop}
\begin{proof}
(1) trivially implies (2). So suppose that (2) holds. Then it follows from the proof of Proposition \ref{prop1} that   $[x, F(G)] \le \Z(G) = 1$. This shows that $x$ centralizes $F(G)$. Since $\C_{G}(F(G)) \le F(G)$, it follows that $x \le \Z(F(G))$ and (1) holds. This completes the proof of the proposition.  \hfill $\Box$

\end{proof}

The following proof of Theorem C is similar to the proof of \cite[Theorem A]{mI08}.
\vspace{.13in}

\noindent \emph{Proof of Theorem C.} Write $M = M(G)$. Since $[A, x]$ is a normal subset of
$A$, by Proposition \ref{prop1} we get $[A, M] \le \Z(G)$. 
Thus $[A, M, M] = 1$. Now by the three-subgroup lemma, it follows that
$\gamma_2(M) \le \C_{G}(A) \le A$. Thus $\gamma_4(M) = [\gamma_2(M), M, M] \le
[A, M , M] = 1$. This proves that $M$ is  nilpotent of class at most $3$,
which completes the proof of the theorem. \hfill $\Box$

\vspace{.13in}

A group $G$ is said to be of \emph{conjugate rank} $1$ if all non-central elements of $G$ have the same conjugacy class size. 

Finally we prove the following proposition which is not related to the previous discussion directly, but is a nice application of Proposition \ref{prop1}.

\begin{prop}
 Let $G$ be  a finite $p$-group of conjugate rank $1$. Then $G$ is flat if and only if the nilpotency class of $G$ is $2$. 
\end{prop}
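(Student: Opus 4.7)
The plan is to combine a short commutator computation with Proposition \ref{prop1}. The crucial preliminary observation is that a $p$-group $G$ of conjugate rank $1$ has exactly two conjugacy class sizes, namely $1$ (for elements of $\Z(G)$) and some $p^k > 1$ (for non-central elements). Consequently \emph{every} element of $G$ is small, so $M(G) = G$.

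For the implication ``class $2 \Rightarrow$ flat'', I would simply observe that $\gamma_2(G) \le \Z(G)$ makes the map $g \mapsto [x,g]$ a group homomorphism $G \to \gamma_2(G)$: using the identity $[x,gh] = [x,h][x,g]^h$ together with the centrality of $[x,g]$, the conjugate $[x,g]^h$ equals $[x,g]$. The image of this homomorphism is exactly $[x,G]$, which is therefore a subgroup, so $G$ is flat. This direction does not actually require conjugate rank $1$.

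For the implication ``flat $\Rightarrow$ class $2$'', the goal is to apply Proposition \ref{prop1} with $K = G$. Flatness only gives that $[x,G]$ is a subgroup of $G$; the step that must be inserted is the upgrade to ``normal subset of $G$''. Here I would use the factorization $x^G = x[x,G]$ combined with the $G$-invariance of the conjugacy class $x^G$: for any $g \in G$, a short manipulation yields
\[
g^{-1}[x,G]g \;=\; [x,g]^{-1}[x,G].
\]
Since $[x,g] \in [x,G]$ and, by flatness, $[x,G]$ is a subgroup, the right side equals $[x,G]$; hence $[x,G]$ is normal in $G$. Proposition \ref{prop1} then gives $[G,G] = [M(G),G] \le \Z(G)$, so the nilpotency class of $G$ is at most $2$, and since conjugate rank $1$ rules out the abelian case, it is exactly $2$.

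The main obstacle is really just the one calculational step promoting ``$[x,G]$ is a subgroup'' to ``$[x,G]$ is $G$-invariant''; once that is in hand, the proposition does all the real work. Nothing else in the argument requires the prime $p$ explicitly, but the fact that $G$ is a $p$-group enters through rank $1$ forcing only two class sizes and hence $M(G) = G$.
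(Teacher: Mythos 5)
Your proof is correct and follows essentially the same route as the paper: establish that $[x,G]$ is a normal subgroup of $G$ for each non-central $x$ and feed this into Proposition \ref{prop1} (the paper invokes the proof of that proposition for each such $x$, while you observe $M(G)=G$ and apply the proposition directly --- the same underlying argument via Lemma \ref{lemma2}). Your computation $g^{-1}[x,G]g=[x,g]^{-1}[x,G]=[x,G]$ in fact supplies the justification for the one step the paper asserts without proof, namely that flatness of $[x,G]$ automatically upgrades to normality.
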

\begin{proof}
Since every group of nilpotency class $2$ is flat, we only need to prove the only if part of the proposition. So let $G$ be flat and $x \in G - \Z(G)$. Thus $[x, G]$ is a subgroup and hence a normal subgroup of $G$. Now it follows from the proof of Proposition \ref{prop1} that  $[x, G] \le \Z(G)$. Since $\gamma_2(G) = \gen{[x, G] | x \in G - \Z(G)}$, it follows that $\gamma_2(G) \le \Z(G)$. This completes the proof of the proposition.  \hfill $\Box$ 

\end{proof}

\noindent{\bf Acknowledgements.} I thank Prof. E. C. Dade for his useful comments and suggestions.

\end{document}